\title{{\sc A Note on ``Folding Wheels and Fans''}}
\author{
 Ton~Kloks\inst{1} 
\and
 Yue-Li~Wang\inst{2}} 
\institute{ 
 Department of Computer Science\\
 National Tsing Hua University, Taiwan
\and 
 Department of Information Management\\
 National Taiwan University of Science and Technology, Taiwan\\
 {\tt ylwang@cs.ntust.edu.tw}
}
\begin{document}

\maketitle

\begin{abstract}
In~\cite{kn:gervacio} 
Gervacio, Guerrero and Rara obtained formulas for the size of the 
largest clique onto which a graph $G$ folds. 
We prove an interpolation lemma which simplifies some of their 
arguments and we correct a mistake in their formula for wheels.  
\end{abstract}

\section{Introduction}

We consider undirected graphs without loops or multiple edges. 

\begin{definition}
Let $G=(V,E)$ be a graph and let $x$ and $y$ be two 
vertices in $G$ that are at distance two. 
A simple fold with respect to $x$ and $y$ is the operation 
which identifies $x$ and $y$.  
\end{definition}
If multiple edges appear then they are identified. 

\bigskip 

When $G$ is connected then any maximal sequence of simple folds 
turns $G$ into a clique. 
Cook and Evans, and later also Wood, show that a connected 
graph $G$ can always be folded onto a clique with $\chi(G)$ 
vertices~\cite{kn:cook,kn:wood}. 
We are interested in the number $\Sigma(G)$ 
of vertices of the largest clique 
onto which a connected graph $G$ folds. 

\bigskip 

We follow the terminology of~\cite{kn:hahn}. An 
epimorphism is a surjective homomorphism.  
Consider an epimorphism 
$\phi: G \rightarrow K_s$. 
We call 
\[\{\;\phi^{\leftarrow}(h)\;|\; h \in V(K_s)\;\}\] 
the color classes of $\phi$ in $G$. 
A homomorphism $\phi: G \rightarrow H$ is faithful 
if $\phi(G)$ is an induced subgraph of $H$. A faithful epimorphism 
is called complete.  
 
\bigskip

\begin{definition}
Let $G$ be a graph. The achromatic number $\Psi(G)$ is the 
number of vertices in a largest clique $K$ for which there 
is a complete homomorphism \[f:G \rightarrow K.\] 
\end{definition}

Equivalently, $\Psi(G)$ is the maximal number of colors in a 
proper vertex coloring of $G$ such that for every pair of colors 
there is an edge of which the endvertices are colored with the 
two colors. 
Finding the achromatic number of a graph 
is NP-complete, even for trees, 
however it is fixed-parameter 
tractable~\cite{kn:farber,kn:manlove,kn:mate}. 

\bigskip 

\begin{lemma}
\label{reduction}
Assume that $G$ has a universal vertex $u$. 
Then 
\[\Sigma(G)=1+\Psi(G-u)=\Psi(G).\]
\end{lemma}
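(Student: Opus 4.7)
The plan is to establish the chain of inequalities
\[
 \Sigma(G) \;\leq\; \Psi(G) \;\leq\; 1+\Psi(G-u) \;\leq\; \Sigma(G),
\]
which forces equality throughout.

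For the first inequality I would observe that any maximal sequence of simple folds taking $G$ onto $K_s$ produces, by definition, a surjective homomorphism whose image is trivially an induced subgraph of $K_s$; thus the folding witnesses a complete homomorphism and $\Sigma(G) \leq \Psi(G)$. For the second inequality, since $u$ is universal, its color class in any complete homomorphism $f:G \rightarrow K_t$ must be the singleton $\{u\}$, so the restriction of $f$ to $G-u$ yields a complete homomorphism onto $K_{t-1}$---one checks that the edges witnessing completeness between two non-$u$ color classes lie entirely in $G-u$---and hence $\Psi(G) - 1 \leq \Psi(G-u)$.

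The interesting step is the third inequality. Starting from a complete homomorphism $g: G-u \rightarrow K_s$ with color classes $C_1,\dots,C_s$, I plan to build a folding sequence of $G$ onto $K_{s+1}$ by successively folding the vertices of each $C_i$ together. The crucial observation is that, because $u$ is adjacent to everything, any two non-adjacent vertices in $G-u$ are at distance exactly $2$ in $G$; hence the distance-two condition required for a simple fold is automatic, provided only that the two vertices to be identified are non-adjacent in the \emph{current} graph. Each $C_i$ is independent in $G-u$, and identifying $x,y \in C_i$ leaves the remainder of $C_i$ still pairwise non-adjacent, so the whole class can be collapsed to a single vertex $c_i$. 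Once this has been done for every class, the surviving vertex set is $\{u,c_1,\ldots,c_s\}$; completeness of $g$ guarantees that every $c_i$ and $c_j$ are joined (an edge of $G-u$ from $C_i$ to $C_j$ survives the identifications), and universality of $u$ supplies the remaining edges, so the folded graph is $K_{s+1}$.

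The main obstacle I anticipate is the bookkeeping in this last step: one must verify that $u$ remains universal and that the members of each $C_i$ remain pairwise non-adjacent throughout the folding process, so that every intermediate simple fold is legal. Both facts follow from the observation that a simple fold merely takes the union of two neighbourhoods, and therefore cannot create an edge between two vertices that were previously non-adjacent to both of the identified ones; a routine induction on the number of folds performed so far will finish the argument.
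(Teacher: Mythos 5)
Your proof is correct and follows essentially the same route as the paper: the key observation in both is that any two nonadjacent vertices of $G-u$ are at distance two through the universal vertex $u$, so a complete coloring of $G$ can be realized by a sequence of simple folds, which combined with the trivial inequality $\Sigma(G)\leq\Psi(G)$ gives $\Sigma(G)=\Psi(G)$. The only difference is that you verify $\Psi(G)=1+\Psi(G-u)$ directly (the singleton class for $u$, plus restriction and extension of complete colorings), whereas the paper cites the Harary--Hedetniemi join formula $\Psi(G_1\ast G_2)=\Psi(G_1)+\Psi(G_2)$; your version is more self-contained but the content is the same.
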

\begin{proof}
Any two nonadjacent vertices of $G-u$ are at 
distance two in $G$. Thus any achromatic coloring of 
$G$ is obtained by a maximal series of folds. 
The universal vertex must appear in a color class 
by itself. 

\medskip 

\noindent
Harary and Hedetniemi showed that, if $G$ is the join of two 
graphs $G_1$ and $G_2$, then 
$\Psi(G)=\Psi(G_1)+\Psi(G_2)$~\cite{kn:harary2}.   
\qed\end{proof}

\bigskip 

Bodlaender showed that achromatic number is NP-complete for 
trivially perfect graphs~\cite{kn:bodlaender} (see also~\cite{kn:farber}). 
Since the class of trivially perfect graphs is closed under 
adding a universal vertex, by Lemma~\ref{reduction} 
also the folding problem is 
NP-complete for this class of graphs. 

\section{An interpolation theorem}

Gervacio et al~\cite[Theorem 4.2 and Theorem 5.2]{kn:gervacio} 
showed that, for fans and wheels, 
there is a fold onto $K_k$ for any 
\[\chi(G) \leq k \leq \Sigma(G).\] 
The following theorem is similar to the 
interpolation theorem of Harary, Hedetniemi and Prins~\cite{kn:harary}.

\begin{theorem}
\label{interpolation}
Let $G$ be a connected graph and let $\chi(G) \leq k \leq \Sigma(G)$.
There is a folding of $G$ onto $K_k$.
\end{theorem}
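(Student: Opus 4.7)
The plan is to start from a folding sequence realising $\Sigma(G)$ and to pinpoint an intermediate graph whose chromatic number is exactly $k$; once such a graph is located, applying the Cook--Evans--Wood theorem to it will finish the argument.

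By the definition of $\Sigma(G)$, I would fix any maximal sequence of simple folds
\[
G = G_0 \to G_1 \to \cdots \to G_t = K_{\Sigma(G)}.
\]
The key lemma to establish first is that $\chi$ is monotone non-decreasing along this sequence and changes by at most one per fold, namely
\[
\chi(G_{i-1}) \;\le\; \chi(G_i) \;\le\; \chi(G_{i-1})+1.
\]
The lower bound is immediate: any proper $c$-colouring of $G_i$ lifts to a proper $c$-colouring of $G_{i-1}$ by assigning to the two folded vertices the colour of their merged image, which is legal because the two folded vertices are non-adjacent. The upper bound is the step requiring most care: starting from an optimal colouring of $G_{i-1}$, the merged vertex can always be coloured with a colour not appearing in the joint neighbourhood $N(x)\cup N(y)$, and at most one fresh colour is ever needed.

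Granting the lemma, the theorem follows by a discrete intermediate-value argument. Since $\chi(G_0)=\chi(G)\le k$ and $\chi(G_t)=\chi(K_{\Sigma(G)})=\Sigma(G)\ge k$, monotonicity with unit steps forces some index $i^{\ast}$ with $\chi(G_{i^{\ast}})=k$. Simple folds preserve connectedness, so $G_{i^{\ast}}$ is a connected graph of chromatic number $k$; the Cook--Evans--Wood theorem therefore gives a folding $G_{i^{\ast}}\to K_k$. Concatenating it with the initial segment $G\to G_{i^{\ast}}$ of the original sequence yields the desired folding $G\to K_k$.

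The main obstacle I anticipate is the upper bound $\chi(G_i)\le \chi(G_{i-1})+1$. Though elementary, it must be checked that a single fresh colour really does always suffice for the merged vertex, which ultimately rests on the observation that the colouring of $G_{i-1}$ uses only $\chi(G_{i-1})$ colours in total on $N(x)\cup N(y)$. All the remaining ingredients---existence of the folding sequence to $K_{\Sigma(G)}$, the Cook--Evans--Wood theorem, and preservation of connectedness under folding---are invoked as black boxes.
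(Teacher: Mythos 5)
Your proposal is correct and follows essentially the same route as the paper: track the chromatic number along the simple-fold sequence to $K_{\Sigma(G)}$, use the fact that each fold changes $\chi$ by at most one (which the paper cites from Hahn--Tardif rather than reproving), locate an intermediate graph with $\chi=k$, and finish with the Cook--Evans--Wood theorem followed by concatenation. The only difference is that you supply the short direct argument for $\chi(G_{i-1})\le\chi(G_i)\le\chi(G_{i-1})+1$ yourself, which is a harmless addition.
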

\begin{proof}
The proof is similar to the one given
in~\cite[Proposition~4.12]{kn:hahn} for the achromatic number.

\medskip

\noindent
Consider a simple fold, which maps $G$ to $G^{\prime}$.
Then (see eg~\cite[Corollary~4.6]{kn:hahn}),
\[\chi(G) \leq \chi(G^{\prime}) \leq \chi(G)+1.\]
The folding of $G$ onto the complete graph with $\Sigma(G)$
vertices is a sequence of simple folds. Therefore,
after some initial sequence of simple folds, there must appear
a graph $G^{\prime\prime}$ with chromatic number $k$.
There is a fold of
$G^{\prime\prime}$ onto $K_k$~\cite{kn:cook,kn:wood}. 
The concatenation gives the
desired folding of $G$ onto $K_k$.
\qed\end{proof}

\section{Wheels}

Consider $C_9$ and label the vertices $[1,\dots,9]$. 
We obtain a complete coloring with 
four colors $a$, $b$, $c$ and $d$ by coloring 
the vertices in order 
\[[a,\;d,\;b,\;a,\;c,\;d,\;a,\;c,\;b].\] 
Thus $\Psi(C_9) \geq 4$. However, the formula 
in~\cite[Theorem 6.1]{kn:gervacio} gives $\Psi(C_9)=3$ (for 
$s=1$ and $n=2(s+1)^2+1=9$). 

\bigskip 

The following result of Marcu provides the upperbound. 
It shows that $n \geq 10$ when $\Psi(C_n) = 5$. 
See also~\cite{kn:geller,kn:lee}. 

\begin{theorem}[\cite{kn:marcu}]
For an $n$-cycle with achromatic number $\Psi$, 
\[n \geq 
\begin{cases}
\frac{\Psi(\Psi-1)}{2} & \quad\text{if $\Psi$ is odd}\\
\frac{\Psi^2}{2} & \quad\text{if $\Psi$ is even.}
\end{cases}\] 
\end{theorem}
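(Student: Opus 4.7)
The plan is a short double-counting argument on the color classes. Let $\phi$ be a complete coloring of $C_n$ using $\Psi$ colors, and write $V_1,\dots,V_\Psi$ for the color classes, with $n_i=|V_i|$. I would exploit the defining property of completeness in the most direct way: for each pair of distinct colors $\{i,j\}$ there must be at least one edge of the cycle whose endpoints receive colors $i$ and $j$. Fixing a color $i$ and letting $j$ range over the other $\Psi-1$ colors yields $\Psi-1$ distinct edges of $C_n$, each incident with $V_i$.

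Next I would bound this number of incident edges from above using the fact that $C_n$ is $2$-regular. Summing vertex degrees over $V_i$ gives $\sum_{v\in V_i}\deg v = 2n_i$; since $\phi$ is proper, no edge has both endpoints in $V_i$, so the number of edges incident with $V_i$ equals exactly $2n_i$. Therefore $2n_i\geq \Psi-1$, which forces $n_i\geq \lceil (\Psi-1)/2\rceil$ for every $i$.

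Summing over all $\Psi$ color classes gives $n=\sum_i n_i\geq \Psi\lceil (\Psi-1)/2\rceil$, and the parity split in the statement is simply the two values of this ceiling: $(\Psi-1)/2$ for odd $\Psi$, yielding $n\geq \Psi(\Psi-1)/2$, and $\Psi/2$ for even $\Psi$, yielding $n\geq \Psi^2/2$.

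I do not anticipate a genuine obstacle. The entire proof hinges on recognizing that the degree budget $2n_i$ of a color class in a $2$-regular graph is the resource that must cover $\Psi-1$ distinct witness edges, and the improved bound in the even case is produced automatically by the ceiling function rather than by any extra combinatorial argument.
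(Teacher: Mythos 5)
Your proof is correct. There is nothing in the paper to compare it against: the theorem is quoted from Marcu with a citation and no argument is given, so your note in effect supplies the proof that the paper delegates to the reference. The argument is complete as written; the one step worth making explicit is that the $\Psi-1$ witness edges obtained by fixing a color $i$ and varying $j$ are pairwise distinct because their endpoints outside $V_i$ carry distinct colors, which you use implicitly. It is also worth emphasizing why your route is the right one: the cruder global count, namely that $C_n$ must contain at least $\binom{\Psi}{2}$ edges and hence $n \geq \Psi(\Psi-1)/2$, already gives the odd case, but the even-case strengthening to $\Psi^2/2$ genuinely requires the per-class degree budget $2n_i \geq \Psi-1$ together with the integrality of $n_i$; your ceiling $\lceil (\Psi-1)/2 \rceil$ is exactly where the extra $\Psi/2$ comes from. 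Finally, the argument is not specific to cycles: the same reasoning shows that any $d$-regular graph admitting a complete coloring with $k$ colors has at least $k\lceil (k-1)/d\rceil$ vertices, with the cycle case being $d=2$.
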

 
\subsection{Threshold graphs}

A graph is trivially perfect if it has no 
induced $C_4$ and no induced $P_4$. The result of Bodlaender 
implies that the folding number is NP-complete for 
trivially perfect graphs~\cite{kn:bodlaender}. 
In the following theorem we show that if in 
every induced subgraph there is at most one component 
with more than one vertex, then the problem is polynomial. 

\bigskip 

For our purposes the following characterization of 
threshold graphs is most useful. 

\begin{theorem}[\cite{kn:chvatal}]
\label{thm threshold}
A graph is a threshold graph if and only if every 
induced subgraph has an isolated vertex or a universal vertex. 
\end{theorem}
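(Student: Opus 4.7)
The plan is to establish both implications by induction on $|V(G)|$, taking as the working definition of a threshold graph one that can be built from a single vertex by successively adjoining vertices each of which is either isolated from, or adjacent to, every vertex already present. Under this equivalent formulation, Chv\'atal's characterization becomes a tight fit with the induction.

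For the ``only if'' direction, I would first verify that the class of threshold graphs is closed under taking induced subgraphs. Given a construction sequence for $G$ and a vertex $v$ to be deleted, simply omit the step at which $v$ is adjoined; the remaining steps still adjoin vertices that are isolated from, or universal with respect to, the vertices present at that moment. Hence it suffices to exhibit an isolated or universal vertex in an arbitrary threshold graph $G$, and for this the last vertex adjoined in any construction sequence of $G$ patently serves.

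For the ``if'' direction, proceed by induction on $|V(G)|$. The case $|V(G)|=1$ is trivial. Otherwise the hypothesis supplies an isolated or universal vertex $v$ of $G$. Every induced subgraph of $G-v$ is already an induced subgraph of $G$, so the hypothesis is inherited by $G-v$ and, by induction, $G-v$ is threshold. Appending $v$ to its construction sequence, with the matching operation, yields a construction sequence for $G$.

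The only genuinely delicate step is the closure of the threshold class under vertex deletion, and this is the reason to phrase the induction in terms of construction sequences rather than one of the other equivalent definitions (forbidden induced subgraphs $2K_2,\,P_4,\,C_4$, linearly ordered neighborhoods, or a threshold weighting). Once that closure is in hand, both directions collapse to essentially one-line inductive steps.
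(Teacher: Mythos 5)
The paper does not prove this statement at all: it is quoted as a known characterization due to Chv\'atal and Hammer, so there is no internal proof to compare against. Your argument is the standard and correct proof of the characterization \emph{provided} one takes as the definition of a threshold graph the constructive one (repeatedly adjoining isolated or universal vertices): closure under induced subgraphs by omitting steps from the construction sequence, the last-adjoined vertex witnessing the ``only if'' direction, and a one-step induction for the ``if'' direction are all sound. The one caveat is definitional. Chv\'atal and Hammer's theorem is a statement about graphs admitting a threshold weighting (vertex weights $w$ and a threshold $t$ with $S$ independent iff $\sum_{v\in S}w(v)\le t$), and the paper itself gestures at yet another formulation (no induced $P_4$, $C_4$, or $2K_2$). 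What you have proved is the equivalence of the isolated-or-universal-vertex property with the construction-sequence property; a self-contained proof of the cited theorem would also need the (easy, but not free) equivalence of your working definition with whichever definition is taken as primary. Since you flag this choice explicitly and the paper never commits to a primary definition, this is a presentational gap rather than a mathematical one, but it is worth stating the bridge: for instance, a construction sequence immediately yields a threshold weighting by assigning geometrically growing weights to universal vertices, and conversely a vertex of maximum weight in a threshold-weighted graph is isolated or universal, which restarts your induction.
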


Alternatively, threshold graphs are those graphs without induced 
$P_4$, $C_4$ and $2K_2$. 

\bigskip 

Theorem~\ref{thm threshold} immediately gives the following result. 

\begin{theorem}
If $G$ is a threshold graph then 
\[\chi(G)=\Sigma(G)=\Psi(G).\]
\end{theorem}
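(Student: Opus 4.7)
The plan is to reduce the theorem to the single equality $\chi(G)=\Psi(G)$. For every connected graph one has $\chi(G)\leq\Sigma(G)\leq\Psi(G)$: the left inequality is the Cook--Evans--Wood theorem already invoked in Theorem~\ref{interpolation}, and the right one holds because any folding onto $K_s$ is itself a complete homomorphism and so provides a complete $s$-coloring. Hence it suffices to show $\chi(G)=\Psi(G)$ for every threshold graph $G$; the chain is then closed by sandwiching.

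I would prove $\chi(G)=\Psi(G)$ for all threshold graphs, connected or not, by induction on $|V(G)|$. The deletion steps below preserve the threshold class but may destroy connectivity, which is why the disconnected case has to be carried along. The base case is a single vertex. For the inductive step, Theorem~\ref{thm threshold} guarantees either a universal vertex $u$ or an isolated vertex $v$, and both $G-u$ and $G-v$ are again threshold.

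If $u$ is universal, then $\chi(G)=1+\chi(G-u)$ because $u$ must occupy its own color class, and Lemma~\ref{reduction} already supplies $\Psi(G)=1+\Psi(G-u)$. Applying the induction hypothesis to $G-u$ finishes this case.

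The case that will need the most care is an isolated vertex $v$. The equality $\chi(G)=\chi(G-v)$ is routine. For the achromatic number, $\Psi(G-v)\leq\Psi(G)$ follows by assigning $v$ any color already in use. The reverse inequality is the crux: in a complete coloring of $G$ the color $c$ assigned to $v$ must, by the definition of complete coloring, appear at the endpoint of some edge together with every other color, so in particular $c$ appears on a non-isolated vertex; removing $v$ therefore leaves a complete coloring of $G-v$ with the same number of colors. With $\Psi(G)=\Psi(G-v)$ established, the induction hypothesis on $G-v$ concludes the proof.
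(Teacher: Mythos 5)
Your proof is correct and follows essentially the same route as the paper: an induction driven by the Chv\'atal--Hammer characterization (Theorem~\ref{thm threshold}), using the universal-vertex/join formula for $\Psi$ in one case and the observation that an isolated vertex cannot form a color class by itself in the other. You merely make explicit what the paper leaves implicit, namely the sandwich $\chi(G)\leq\Sigma(G)\leq\Psi(G)$ and the bookkeeping of the induction.
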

\begin{proof}
By a result of Harary and Hedetniemi~\cite[Proposition 6]{kn:harary2}, 
when $G$ is the join of two graphs $G_1$ and $G_2$ 
then 
\begin{equation}
\label{eqnthreshold2}
\Psi(G)=\Psi(G_1)+\Psi(G_2). 
\end{equation}

\medskip 

\noindent
Now assume that $G$ has an isolated 
vertex $x$. We claim that  
\begin{equation}
\label{eqnthreshold}
\Psi(G) = \max\;\{\;1,\;\Psi(G-x)\;\}.
\end{equation}
To see that, observe that $x$ cannot form a color class 
by itself.  
\qed\end{proof}

\end{document}